\numberwithin{equation}{section} %% Comment out for sequentially-numbered
\numberwithin{figure}{section} %% Comment out for sequentially-numbered
  \theoremstyle{plain}
  \newtheorem*{thm*}{Theorem}
  \theoremstyle{plain}
  \newtheorem{thm}{Theorem}[section]
  \theoremstyle{definition}
  \newtheorem{defn}[thm]{Definition}
  \theoremstyle{plain}
  \theoremstyle{plain}
  \newtheorem{prop}[thm]{Proposition}
  \theoremstyle{plain}
  \newtheorem{cor}[thm]{Corollary}
  \theoremstyle{remark}
  \theoremstyle{remark}
  \newtheorem*{acknowledgement*}{Acknowledgement}
\begin{document}

\title[Projective deformations of sprays and Finsler spaces]
{Funk functions and projective deformations of sprays and Finsler
  spaces of scalar flag curvature}

\author[Bucataru]{Ioan Bucataru} \address{Ioan Bucataru, Faculty of
Mathematics, Alexandru Ioan Cuza University \\ Ia\c si, 
Romania} \urladdr{http://www.math.uaic.ro/\textasciitilde{}bucataru/}

\date{\today}

\begin{abstract}
In 2001, Zhongmin Shen asked if it is possible for two projectively
related Finsler metrics to have the same Riemann curvature tensor, \cite[page 184]{Shen01}. 
In this paper, we provide an answer to this question, within the
class of Finsler metrics of scalar flag curvature. In Theorem \ref{thm1}, we show that the answer is
negative, for non-vanishing scalar flag curvature. The answer is known
to be positive when the scalar flag curvature vanishes, \cite{GM00,
  Shen01} and this positive answer is related to the existence of many
solutions to Hilbert's Fourth Problem.

As a generalisation of this problem, we can ask if it
is possible for a given spray, with non-vanishing scalar flag curvature, to represent,
after reparametrisation, the geodesic spray of a Finsler metric. In Proposition \ref{prop1}, we
show how to construct sprays whose projective class does not contain any Finsler metrizable
spray with the same Riemann curvature tensor.   
\end{abstract}

\subjclass[2000]{53C60, 58B20, 49N45, 58E30}

\keywords{projective deformation, Funk function, Jacobi endomorphism,
  Riemann curvature, Finsler metrizability}

\maketitle

\section{Introduction}

A system of second order ordinary differential equations (SODE), whose
coefficients functions are positively two-homogeneous, can be
identified with a second order vector field, which is called a
spray. If such a system represents the variational (Euler-Lagrange)
equations of the energy of a Finsler metric, the system is said to
be Finsler metrizable, the corresponding spray represents the geodesic
spray of the Finsler metric. In such a case, the system comes with a
fixed parameterisation, which is given by the arc-length of the Finsler metric.

An orientation preserving reparametrisation of a homogeneous SODE can change
substantially the geometry of the given system, \cite[Section 3(b)]{Douglas27}. Two sprays that are
obtained by such reparametrisation are called projectively related. It has been shown in
\cite{BM12} that the property of being Finsler metrizable is very
unstable to reparameterization and hence to projective deformations.

Within the geometric setting one can associate to a spray, important
information are encoded in the Riemann curvature tensor
($R$-curvature or Jacobi endomorphism). Projective deformations that preserve the Riemann curvature
are called Funk functions. In this paper we are interested in the
following question, which is due do Zhongmin Shen, \cite[page
184]{Shen01}. Can we projectively deform a Finsler metric, by a Funk
function, and obtain a new Finsler metric? In other words, can we
have, within the same projective class, two Finsler metrics with the
same Jacobi endomorphism? We prove, in Theorem \ref{thm1}, that projective deformations by Funk functions
of geodesic sprays, of non-vanishing scalar flag curvature, do not preserve the property of being
Finsler metrizable. As a consequence we obtain that for an isotropic
spray, its projective class cannot have more than one geodesic spray
with the same Riemann curvature as the given spray.

The negative answer to Shen's question is somehow surprising and
heavily relies on the fact that the original geodesic spray is not
$R$-flat. The projective metrizability problem for a flat spray is
known as (the Finslerian version of) Hilbert's Fourth Problem, \cite{Alvarez05, Crampin11}. It is known that in the case of an $R$-flat spray, any
projective deformation by a Funk function leads to a Finsler
metrizable spray, see \cite[Theorem 7.1]{GM00}, \cite[Theorem
10.3.5]{Shen01}. 

Given the negative answer to Shen's question it is natural to ask if a
given spray is projectively equivalent to a geodesic spray with the
same curvature. In Proposition \ref{prop1}, we show that there exist
sprays for which the answer is negative.

\section{A geometric framework for sprays and Finsler spaces}

In this section, we provide a geometric framework that we will use to
study, in the next sections, some problems related to projective
deformations of sprays and Finsler spaces by Funk functions. The main references that we
use for providing this framework are \cite{BCS00, Grifone72, Shen01, Szilasi03}.

\subsection{A geometric framework for sprays}

In this work, we consider $M$ an $n$-dimensional smooth and connected
manifold, and $(TM, \pi, M)$ its tangent
bundle. Local coordinates on $M$ are denoted by $(x^i)$, while induced
coordinates on $TM$ are denoted by $(x^i, y^i)$. Most of the geometric
structures in our work will be defined not on the tangent space $TM$,
but on te slit tangent space $T_0M=TM\setminus\{0\}$, which is the tangent space with the zero
section removed. Standard notations will be used in this paper,
$C^{\infty}(M)$ represents the set of smooth functions on $M$, ${\mathfrak
  X}(M)$ is the set of vector fields on $M$, and 
$\Lambda ^k(M)$ is the set of $k$-forms on $M$. 

The geometric framework that we will use in this work
is based on the Fr\"olicher-Nijenhuis formalism, \cite{FN56, GM00}. There are two
important derivations in this formalism. For a vector valued
$\ell$-form $L$ on $M$, consider $i_L$ and $d_L$ the corresponding
derivations of degree $(\ell-1)$ and $\ell$, respectively. The two
derivations are connected by the following formula
\begin{eqnarray*}
  d_L=i_L\circ d - (-1)^{\ell-1}d \circ i_L. \end{eqnarray*}
If $K$ and $L$ are two vector valued forms on $M$, of degrees $k$ and
$\ell$, then the Fr\"olicher-Nijenhuis bracket $[K, L]$ is the vector
valued $(k+\ell)$-form, uniquely determined by 
\begin{eqnarray*}
d_{[K,L]}=d_K\circ d_L - (-1)^{k\ell}d_L\circ d_K. \end{eqnarray*}
In this work, we will use various commutation formulae for these
derivations and the Fr\"olicher-Nijenhuis bracket, following Grifone
and Muzsnay \cite[Appendix A]{GM00}.

There are two canonical structures on $TM$, one is the Liouville (dilation)
vector field ${\mathbb C}$ and the other one is the tangent structure
(vertical endomorphism) $J$. Locally, these two structures are given by 
\begin{eqnarray*}
{\mathbb C}=y^i\frac{\partial}{\partial y^i}, \quad
J=\frac{\partial}{\partial y^i}\otimes dx^i. \end{eqnarray*} 

A system of second order ordinary differential equations (SODE), in normal
form, 
\begin{eqnarray}
\frac{d^2x^i}{dt^2} + 2G^i\left(x,
  \frac{dx}{dt}\right)=0, \label{sode} \end{eqnarray}
can be identified with a special vector field $S\in {\mathfrak
  X}(TM)$, which is called a \emph{semispray} and satisfies the
condition $JS={\mathbb C}$. In this work, a special attention will be
paid to those SODE that are positively homogeneous of order two,
with respect to the fiber coordinates. To address the most general cases, the corresponding
vector field $S$ has to be defined on $T_0M$. The homogeneity
condition reads $[{\mathbb C}, S]=S$ and the vector field $S$ is
called a \emph{spray}. Locally, a spray $S\in {\mathfrak X}(T_0M)$ is given by 
\begin{eqnarray}
S=y^i\frac{\partial}{\partial x^i} - 2G^i(x,y)\frac{\partial}{\partial
  y^i}. \label{spray}
\end{eqnarray}
The functions $G^i$, locally defined on $T_0M$, are $2$-homogeneous
with respect to the fiber coordinates. A curve $c(t)=(x^i(t))$ is
called a \emph{geodesic} of a spray $S$ if $S\circ
\dot{c}(t)= \ddot{c}(t)$, which means that it satisfies the system
\eqref{sode}.  

An orientation-preserving reparameterization of the second-order system
\eqref{sode} leads to a new second order system and therefore gives
rise to a new spray $\widetilde{S}=S-2P{\mathbb C}$, \cite[Section 3(b)]{Douglas27},
\cite[Chapter 12]{Shen01}. The two sprays $S$ and $\widetilde{S}$ are said to be
\emph{projectively related}. The $1$-homogeneous function $P$ is
called the projective deformation of the spray $S$.

For discussing various problems for a given SODE \eqref{sode} one
can associate a geometric setting to the
corresponding spray. This geometric setting uses the
Fr\"olicher-Nijenhuis bracket of the given spray $S$ and the tangent
structure $J$. The first ingredient to introduce this geometric
setting is the \emph{horizontal projector} associated to the spray $S$, and
it is given by \cite{Grifone72}
\begin{eqnarray*}
h=\frac{1}{2}\left(\operatorname{Id} - [S,J]\right). \end{eqnarray*}

The next geometric structure carries curvature information about the given
spray $S$ and it is called the \emph{Jacobi endomorphism},
\cite[Section 3.6]{Szilasi03}, or the Riemann curvature, \cite[Definition 8.1.2]{Shen01}. This is a
vector valued $1$-form, given by 
\begin{eqnarray}
\Phi=\left(\operatorname{Id} - h\right) \circ [S,h].  \label{jacobi} \end{eqnarray}

A spray $S$ is said to be \emph{isotropic} if its Jacobi endomorphism
takes the form 
\begin{eqnarray}
\Phi = \rho J - \alpha \otimes {\mathbb
  C}. \label{isophi} \end{eqnarray}
The function $\rho$ is called the \emph{Ricci scalar} and it is given
by $(n-1)\rho=\operatorname{Tr}(\Phi)$. The semi-basic $1$-form
$\alpha$ is related to the Ricci scalar by $i_S\alpha=\rho$. 

In this work, we study when projective deformations preserve or not 
some properties of the original spray. Therefore, we recall first the
relations between the geometric structures induced by two projectively
related sprays. For two such sprays $S_0$
and $S=S_0-2P{\mathbb C}$, the corresponding horizontal projectors and Jacobi
endomorphisms are related by, \cite[(4.8)]{BM12}, 
\begin{eqnarray}
h & = & h_0-PJ - d_JP\otimes {\mathbb C}. \label{hh0} \\
\Phi & = & \Phi_0 + \left(P^2 - S_0(P)\right) J - \left(d_J(S_0(P) - P^2) +
  3(Pd_JP  - d_{h_0}P) \right)\otimes {\mathbb C}. \label{phiphi0}
\end{eqnarray}
As one can see from the two formulae \eqref{isophi} and
\eqref{phiphi0}, projective deformations preserve the isotropy
condition. In this work, we will pay special attention to those projective
deformations that preserve the Jacobi endomorphism. 
Such a projective deformation is called a \emph{Funk function} for the
original spray. From
formula \eqref{phiphi0}, we can see that a $1$-homogeneous function
$P$ is a Funk function for the spray $S_0$, if and only if it
satisfies 
\begin{eqnarray}
d_{h_0}P=Pd_JP. \label{eq_Funk}
\end{eqnarray}
See also \cite[Prop. 12.1.3]{Shen01} for alternative expressions of
formulae \eqref{phiphi0} and \eqref{eq_Funk} in local coordinates.

\subsection{A geometric framework for Finsler spaces of scalar flag curvature}

We recall now the notion of a Finsler space, and pay special attention
to those Finsler spaces of scalar flag curvature. 
\begin{defn}
A \emph{Finsler function} is a continuous non-negative function $F: TM\to {\mathbb R}$ that
satisfies the following conditions:
\begin{itemize} \label{def_Finsler}
\item[i)] $F$ is smooth on $T_0M$ and $F(x,y)=0$ if and only if $y=0$;
\item[ii)] $F$ is positively homogeneous of order $1$ in the fiber
  coordinates;
\item[iii)] the $2$-form $dd_JF^2$ is a symplectic form on $T_0M$.
\end{itemize} 
\end{defn}
There are cases when the conditions of the above definition can be
relaxed. One can allow for the function $F$ to be defined on some open
cone $A\subset T_0M$, in which case  we talk about a
\emph{conic-pseudo Finsler function}. We can also allow for the
function $F$ not to satisfy the condition iii) of Definition
\ref{def_Finsler}, in which case we will say that $F$ is a
\emph{degenerate Finsler function}, \cite{AIM93}.

A spray $S\in {\mathfrak X}(T_0M)$ is said to be \emph{Finsler metrizable} if
there exists a Finsler function $F$ that satisfies
\begin{eqnarray}
i_Sdd_JF^2=-dF^2. \label{geodspray1}
\end{eqnarray}
In such a case, the spray $S$ is called the \emph{geodesic spray} of the Finsler
function $F$. Using the homogeneity properties, it can be shown that a
spray $S$ is Finsler metrizable if and only if 
\begin{eqnarray}
d_h F^2 = 0.
\label{geodspray2}  
\end{eqnarray}
The Finsler metrizability problem is a particular case of the inverse
problem of Lagrangian mechanics, which consists in characterising
systems of SODE that are variational. In the Finslerian context, the various methods for
studying the inverse problem has been adapted and developed using 
various techniques in \cite{BM13, BM14, CMS13, GM00, Muzsnay06,
  Szilasi03}.  
\begin{defn} Consider $F$ a Finsler function and let $S$ be its
  geodesic spray. The Finsler function $F$ is said to be of \emph{scalar flag
    curvature} (SFC) if there exists a function $\kappa \in
  C^{\infty}(T_0M)$ such that the Jacobi endomorphism of the geodesic
  spray $S$ is given by 
\begin{eqnarray} \Phi = \kappa\left(F^2 J - Fd_JF\otimes {\mathbb
      C}\right). \label{phik} \end{eqnarray}
\end{defn}
By comparing the two formulae \eqref{isophi} and \eqref{phik} we
observe that for Finsler functions of scalar flag curvature, the
geodesic spray is isotropic. The converse of this statement is true in
the following sense. If an isotropic spray is Finsler metrizable, then the corresponding
Finsler function has scalar flag curvature, \cite[Lemma 8.2.2]{Shen01}.

\section{Projective deformations by Funk functions}

In \cite[page 184]{Shen01}, Zhongmin Shen asks the following
question: given a Funk function $P$ on a Finsler
space $(M, F_0)$, decide whether or not there exists a Finsler metric $F$
that is projectively related to $F_0$, with the projective factor
$P$. Since Funk functions preserve the Jacobi endomorphism under
projective deformations, one can reformulate the question as
follows. Decide wether or not there exists a Finsler function $F$,
projectively related to $F_0$, having the same Jacobi endomorphism
with $F_0$. When the Finsler function $F_0$ is
$R$-flat, the answer is known, every projective deformation by a Funk
function leads to a Finsler metrizable spray, \cite[Theorem 7.1]{GM00},
\cite[Theorem 10.3.5]{Shen01}. 

In the next theorem, we prove that the answer to Shen's Question is
negative, for the case when the Finsler function that we start with
has non-vanishing scalar flag curvature. 

\begin{thm} \label{thm1} Let $F_0$ be a Finsler function of scalar 
flag curvature
  $\kappa_0\neq 0$ and having the geodesic spray $S_0$. Then, there is no
  projective deformation of $S_0$, by a Funk function $P$, that will
  lead to a Finsler metrizable spray $S=S_0-2P{\mathbb C}$. 
\end{thm}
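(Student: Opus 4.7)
The plan is to argue by contradiction: assume that a nontrivial Funk function $P$ for $S_0$ produces a spray $S = S_0 - 2P\mathbb{C}$ that is the geodesic spray of some Finsler function $F$, and deduce $P \equiv 0$. Because $P$ is a Funk function, formula \eqref{phiphi0} forces $\Phi = \Phi_0$; by \eqref{phik} this Jacobi endomorphism is isotropic, so $S$ is isotropic. Since $S$ is also Finsler metrizable, the Shen lemma quoted after \eqref{phik} implies that $F$ is of scalar flag curvature, yielding a second representation $\Phi = \kappa\bigl(F^2 J - F\, d_J F \otimes \mathbb{C}\bigr)$.

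Equating $J$- and $\mathbb{C}$-components in $\Phi = \Phi_0$ gives $\kappa F^2 = \kappa_0 F_0^2$ and $\kappa\, d_J F^2 = \kappa_0\, d_J F_0^2$. Applying $d_J$ to the first identity and using the second to cancel the $\kappa\, d_J F^2$ term produces $F^2\, d_J \kappa = F_0^2\, d_J \kappa_0$, which combined with $\kappa F^2 = \kappa_0 F_0^2$ yields $d_J(\kappa/\kappa_0) = 0$. Hence $\kappa/\kappa_0$ depends only on $x$, so there is a positive function $\mu \in C^{\infty}(M)$ with $F = \mu(x)\, F_0$. Now I invoke the metrizability criterion \eqref{geodspray2}: using $d_{h_0} F_0^2 = 0$, $d_J \mu^2 = 0$, $d_{h_0} \mu^2 = d\mu^2$, and formula \eqref{hh0} for $h$, the identity $d_h F^2 = 0$ collapses to
\begin{equation*}
F_0\, d(\ln \mu^2) \;=\; 2\, d_J(P F_0).
\end{equation*}

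Contracting this identity with $S$ and using $i_S d_J g = \mathbb{C}(g)$ on scalars together with the $2$-homogeneity of $PF_0$, the right-hand side becomes $4 P F_0$; since $\ln \mu^2$ depends only on $x$, the left-hand side becomes $F_0\, y^k \partial_{x^k}\ln \mu^2$. Thus $P = \tfrac{1}{4}\, y^k \partial_{x^k}\ln \mu^2$. Substituting this expression back into the displayed equation reduces it to the pointwise identity
\begin{equation*}
F_0(x,y)\, \partial_{x^l}\sigma(x) \;=\; \bigl(y^k \partial_{x^k}\sigma(x)\bigr)\, \partial_{y^l} F_0(x,y), \qquad \sigma := \ln \mu^2,
\end{equation*}
valid on $T_0 M$. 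If $d\sigma(x_0) \neq 0$ for some $x_0$, then in dimension at least two the hyperplane $\{y : y^k \partial_{x^k}\sigma(x_0) = 0\}$ contains a nonzero vector; at such $(x_0,y)$ the right-hand side vanishes while the left-hand side does not, a contradiction. Hence $\sigma$ is constant on the connected manifold $M$, forcing $P \equiv 0$, against the hypothesis.

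The main obstacle is the passage from $\Phi = \Phi_0$ to the conformal relation $F = \mu(x) F_0$; once that structure is in hand, the metrizability condition \eqref{geodspray2} cascades, via contraction with $S$, into a $y$-linear identity that the positivity and strong convexity of $F_0$ forbid unless $\mu$ is constant.
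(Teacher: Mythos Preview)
Your argument is correct and follows essentially the same route as the paper: both proofs derive from $\Phi=\Phi_0$ the conformal relation $F=\mu(x)F_0$, feed this into $d_hF=0$ via \eqref{hh0} to identify $P$ with (a multiple of) the complete lift of a basic function, and then obtain the identity $F_0\,d\sigma=\sigma^c\,d_JF_0$, which forces $F_0$ to be linear in the fibre variable---contradicting regularity. The only cosmetic differences are that the paper gets the conformal relation directly from $d_JF/F=d_JF_0/F_0$ rather than through $d_J(\kappa/\kappa_0)=0$, and phrases the final contradiction as $d_J(F_0/a^c)=0$ rather than via your hyperplane argument.
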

\begin{proof}
Consider $F_0$ a Finsler function of
non-vanishing scalar flag curvature $\kappa_0$ and let $S_0$ be its geodesic
spray. All geometric structures associated with the Finsler space $(M,
F_0)$ will be denoted with the subscript $0$. The Jacobi endomorphism
of the spray $S_0$ is given by 
\begin{eqnarray}
\Phi_0=\kappa_0 F^2_0 J - \kappa_0F_0 d_JF_0 \otimes {\mathbb
  C}. \label{phi0}
\end{eqnarray}  

We will prove the theorem by contradiction. Therefore, we assume that
there exists a non-vanishing Funk function $P$ for the Finsler function $F_0$, such
that the projectively related spray $S=S_0-2P{\mathbb C}$ is Finsler
metrizable by a Finsler function $F$. Since, $P$ is a Funk function, it follows that the
Jacobi endomorphism $\Phi$ of the spray $S$ is given by
$\Phi=\Phi_0$. From formula \eqref{phi0} it follows that $\Phi=\Phi_0$
is isotropic and using the fact that $S$ is metrizable, we obtain that
$S$ has scalar flag curvature $\kappa$. Consequently, its Jacobi
endomorphism is given by formula \eqref{phik}.
By comparing the two formulae \eqref{phi0} and \eqref{phik} and using
the fact that $\Phi_0=\Phi$, we obtain 
\begin{eqnarray*} \kappa_0 F^2_0 = \kappa F^2, \quad \kappa_0 F_0
  d_JF_0 = \kappa F d_JF. \end{eqnarray*} 
From the above two formulae, and using the fact that $\kappa_0\neq 0$,
we obtain 
\begin{eqnarray*} \frac{d_JF}{F}=\frac{d_JF_0}{F_0}, \end{eqnarray*}
which implies $d_J(\ln F)=d_J(\ln F_0)$ on $T_0M$. Therefore, there exists a
basic function $a$, locally defined on $M$, such that 
\begin{eqnarray}
F(x,y)=e^{2a(x)}F_0(x,y), \forall (x,y)\in T_0M. \label{ff0}\end{eqnarray}
Now, we use the fact that $S$ is the geodesic spray of the Finsler
function $F$, which, using formula \eqref{geodspray2}, implies that $S(F)=0$. $S$ is projectively related
to $S_0$, which means $S=S_0-2P{\mathbb C}$ and hence
$S_0(F)=2P{\mathbb C}(F)$. Last formula fixes the projective
deformation factor $P$, which in view of formula \eqref{ff0} and the
fact that $S_0(F_0)=0$, is given by 
\begin{eqnarray}
P=\frac{S_0(F)}{2F}=\frac{S_0(e^{2a}F_0)}{2e^{2a}F_0}=
\frac{S_0(e^{2a})F_0}{2e^{2a}F_0}  =
\frac{S_0(e^{2a})F_0}{2e^{2a}F_0}=S_0(a)=a^c. \label{pac} \end{eqnarray}  
In the above formula $a^c$ is the complete lift of the function
$a$. Since we assumed that the projective factor $P$ is non-vanishing,
it follows that $a^c$ has the same property. Again, from the fact that
$S$ is the geodesic spray of the Finsler function $F$, it follows that $d_hF=0$. We use now formula \eqref{hh0}
that relates the horizontal projectors $h$ and $h_0$ of the two
projectively related sprays $S$ and $S_0$. It follows
\begin{eqnarray*}
d_{h_0}F- Pd_JF - d_JP{\mathbb C}(F)=0. \end{eqnarray*}
We use the above formula, as well as formula \eqref{ff0}, to obtain
\begin{eqnarray}
2e^{2a}F_0 da - a^c e^{2a} d_J F_0  - e^{2a}F_0
da=0. \label{eq1}\end{eqnarray}
To obtain the above formula we did use also that $a$ is a basic function
and therefore $d_{h_0}a=da$ and $d_Ja^c=da$. In view of these remarks,
we can write formula \eqref{eq1} as follows 
\begin{eqnarray*}
F_0 d_Ja^c - a^c d_JF_0=0. \end{eqnarray*}
Using the fact that $a^c\neq 0$, we can write above formula as 
\begin{eqnarray*}
d_J\left(\frac{F_0}{a^c}\right)=0.
\end{eqnarray*}
Last formula implies that $F_0/a^c=b$ is a basic function and
therefore $F_0(x,y)=b(x)\frac{\partial a}{\partial x^i}(x) y^i$,
$\forall (x,y)\in TM$, which
is not possible due to the regularity condition that the Finsler
function $F_0$ has to satisfy. 
\end{proof}
One can give an alternative proof of  Theorem \ref{thm1} by using the scalar
flag curvature (SFC) test provided by \cite[Theorem 3.1]{BM14}. Within
the same hypothesis of Theorem \ref{thm1},  it can be
shown that the projective deformation $S=S_0-2P{\mathbb C}$, by a Funk
function, is not Finsler metrizable since one condition of the SFC test
is not satisfied. We presented here a direct proof, to make the paper
self contained.

We can reformulate the result of Theorem \ref{thm1} as follows. Let $F_0$ be a Finsler
function of scalar flag curvature $\kappa_0\neq 0$ and let $S_0$ be its
geodesic spray with the Jacobi endomorphism $\Phi_0$. Then, within the
projective class of $S_0$, there is exactly one geodesic spray, and
that one is exactly $S_0$ that has $\Phi_0$ as the Jacobi endomorphism. We point out here the importance of the condition $\kappa_0
\neq 0$. The proof of Theorem \ref{thm1} is based on formula
\eqref{ff0} which is not true, in view of the previous two formulae,
in the case $\kappa_0 = 0$. For the alternative proof of the
Theorem \ref{thm1}, using \cite[Theorem 3.1]{BM14}, we mention that the
SFC test is valid only if the Ricci scalar does not vanish. 

In the case $\kappa_0=0$, which means that the spray $S_0$ is $R$-flat, it is known
that any deformation of the geodesic spray $S_0$ by a Funk function leads to a spray that
is Finsler metrizable, \cite[Theorem 7.1]{GM00}, \cite[Theorem
10.3.5]{Shen01}.

The following corollary is a consequence of Theorem \ref{thm1} and of the above discussion.

\begin{cor} \label{cor1} Let $S_0$ be an isotropic spray, with Jacobi
endomorphism $\Phi_0$ and non-vanishing Ricci scalar. Then, the projective class of $S_0$ contains at
most one Finsler metrizable spray that has $\Phi_0$ as Jacobi
endomorphism. 
\end{cor}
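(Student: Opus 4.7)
The plan is to reduce the statement to Theorem \ref{thm1} by showing that any two Finsler metrizable sprays in the projective class of $S_0$ that share the Jacobi endomorphism $\Phi_0$ must be related by a Funk function projective deformation, which is then ruled out.

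First, I would argue by contradiction and suppose that there exist two distinct Finsler metrizable sprays $S_1$ and $S_2$ in the projective class of $S_0$, both having Jacobi endomorphism $\Phi_0$. Since projective relatedness is an equivalence relation, $S_2 = S_1 - 2P\mathbb{C}$ for some $1$-homogeneous function $P$, which we may assume to be non-vanishing (otherwise $S_1 = S_2$). The fact that $\Phi_2 = \Phi_1 = \Phi_0$, combined with formula \eqref{phiphi0} applied to the pair $(S_1, S_2)$, forces the coefficients of $J$ and of $\otimes\,\mathbb{C}$ in the deformation to vanish; this is exactly the condition \eqref{eq_Funk} that $P$ be a Funk function for $S_1$.

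Next, I would show that the Finsler function $F_1$ metrizing $S_1$ has non-vanishing scalar flag curvature, so that Theorem \ref{thm1} applies. By hypothesis $\Phi_0$ is isotropic with non-vanishing Ricci scalar $\rho_0$. Hence $S_1$ has isotropic Jacobi endomorphism, and being Finsler metrizable, Lemma 8.2.2 of \cite{Shen01} (invoked in the paper) guarantees that $F_1$ has scalar flag curvature $\kappa_1$. Comparing the two representations \eqref{isophi} and \eqref{phik} of $\Phi_1 = \Phi_0$ gives $\rho_0 = \kappa_1 F_1^2$, and since $F_1$ does not vanish on $T_0M$ and $\rho_0 \neq 0$, we conclude $\kappa_1 \neq 0$.

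Finally, I would invoke Theorem \ref{thm1} directly with $F_0$ replaced by $F_1$: there is no non-trivial Funk function projective deformation of $S_1$ leading to a Finsler metrizable spray. This contradicts the existence of $S_2 = S_1 - 2P\mathbb{C}$ with $P$ a non-vanishing Funk function and $S_2$ Finsler metrizable. Therefore $P \equiv 0$ and $S_1 = S_2$, proving uniqueness.

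The only non-routine step is the second one, namely verifying that the projective factor $P$ between two sprays sharing the same Jacobi endomorphism is necessarily a Funk function. This should follow cleanly from \eqref{phiphi0} by equating the coefficients in front of the decomposition $\alpha J + \beta \otimes \mathbb{C}$ and then recognising the Funk equation \eqref{eq_Funk}; once that identification is in hand, the remainder of the argument is a direct appeal to Theorem \ref{thm1} and to the cited characterisation of isotropic metrizable sprays as having scalar flag curvature.
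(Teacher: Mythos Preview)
Your proposal is correct and follows essentially the same approach as the paper: the paper does not give a formal proof of the corollary but states it ``is a consequence of Theorem \ref{thm1} and of the above discussion,'' and your argument spells out precisely that discussion---two metrizable sprays in the class sharing $\Phi_0$ differ by a Funk function, the first is of non-vanishing scalar flag curvature since isotropic metrizable sprays have SFC and $\rho_0\neq 0$, and then Theorem \ref{thm1} yields the contradiction. Note that in the paper a Funk function is \emph{defined} as a projective deformation preserving the Jacobi endomorphism, so your ``only non-routine step'' is in fact immediate by definition and needs no separate verification via \eqref{phiphi0}.
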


The statement in the above corollary gives rise to a new question: is there any case when
we have none? In the next proposition, we will show that the answer to this question is
affirmative if the dimension of the
configuration manifold is grater than two. 

\begin{prop} \label{prop1}
We assume that $\dim M\geq 3$. Then, there exists a spray $S_0$ with
the Jacobi endomorphism $\Phi_0$ such that the projective class of
$S_0$ does not contain any Finsler metrizable spray
having the same Jacobi endomorphism $\Phi_0$.
\end{prop}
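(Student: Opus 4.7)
The plan is to exhibit an isotropic spray $S_0$ whose Jacobi endomorphism $\Phi_0$ carries an intrinsic obstruction to arising from any Finsler function of scalar flag curvature. The key is that isotropy of the Jacobi endomorphism is preserved under projective deformation (compare \eqref{isophi} with \eqref{phiphi0}), so every spray in the projective class of $S_0$ is isotropic; by Shen's Lemma~8.2.2, any Finsler metrizable spray in that class is then the geodesic spray of a Finsler function of scalar flag curvature, with Jacobi endomorphism of the form \eqref{phik}. An obstruction living on $\Phi_0$ will therefore simultaneously preclude metrizability for every candidate in the projective class sharing that Jacobi endomorphism.

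To identify the obstruction, I would write $\Phi_0 = \rho J - \alpha \otimes {\mathbb C}$ with $\rho \neq 0$, and suppose for contradiction that some $S = S_0 - 2Q{\mathbb C}$ in the projective class were Finsler metrizable by a function $F$ with the same Jacobi endomorphism $\Phi_0$. Then $F$ has scalar flag curvature $\kappa$, and comparing \eqref{phik} with \eqref{isophi} gives $\rho = \kappa F^2$ and $\alpha = \kappa F\, d_J F$. Dividing yields the pointwise identity
\[
\frac{\alpha}{\rho} \;=\; \frac{d_J F}{F},
\]
so $\alpha/\rho$ is $d_J$-exact on the open set where $\rho \neq 0$; since $d_J^2 = 0$, this forces the semi-basic integrability condition $d_J(\alpha/\rho) = 0$ there. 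It therefore suffices to produce an isotropic spray $S_0$ with non-vanishing Ricci scalar for which $d_J(\alpha/\rho) \neq 0$ somewhere.

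I would obtain such $S_0$ as a projective deformation $S_0 = \bar{S}_0 - 2P{\mathbb C}$ of the geodesic spray $\bar{S}_0$ of a Finsler function $F_0$ of non-vanishing scalar flag curvature $\kappa_0$, by a $1$-homogeneous non-Funk factor $P$. By \eqref{phiphi0}, $\Phi_0$ remains isotropic, and explicitly
\[
\rho = \kappa_0 F_0^2 + P^2 - \bar{S}_0(P), \qquad \alpha = \kappa_0 F_0\, d_J F_0 + d_J\!\bigl(\bar{S}_0(P) - P^2\bigr) + 3\bigl(P\, d_J P - d_{\bar{h}_0} P\bigr).
\]
The task then reduces to selecting $F_0$ and $P$---for example $F_0$ of constant non-zero flag curvature on a chart of $M$ and $P$ a suitably generic $1$-homogeneous function---so that a direct coordinate computation yields $d_J(\alpha/\rho) \neq 0$ at a point while $\rho \neq 0$ on a neighbourhood.

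The hypothesis $\dim M \geq 3$ enters precisely in this last construction: when $\dim M \geq 3$ there are enough independent fibre coordinates for $P$ to break the tight relationship between $\alpha$ and $\rho$ that would otherwise hold, whereas in dimension two every Finsler metric already has scalar flag curvature and the additional rigidity prevents the obstruction from being realised. The main obstacle in the argument is therefore this explicit verification: a finite but genuine Fr\"olicher--Nijenhuis computation with the formulas in \eqref{phiphi0} is needed to exhibit a concrete pair $(F_0,P)$ satisfying both open conditions simultaneously. Once such an $S_0$ is in hand, the obstruction identified in the second paragraph forbids any Finsler metrizable spray in the projective class of $S_0$ from sharing its Jacobi endomorphism, which is exactly the conclusion of the proposition.
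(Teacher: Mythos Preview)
Your proposed obstruction appears to be vacuous. The condition $d_J(\alpha/\rho)=0$ that you wish to violate seems to hold \emph{identically} for isotropic sprays, not merely for those coming from a Finsler metric. Indeed, take the very example the paper uses, $S_0=\widetilde S-2\lambda\widetilde F\,{\mathbb C}$ with $\widetilde F$ of constant flag curvature $\widetilde k$: from \eqref{phiphi0} one gets $\rho_0=(\widetilde k+\lambda^2)\widetilde F^{\,2}$ and $\alpha_0=(\widetilde k+\lambda^2)\widetilde F\,d_J\widetilde F$, hence $\alpha_0/\rho_0=d_J\widetilde F/\widetilde F=d_J(\ln\widetilde F)$ and $d_J(\alpha_0/\rho_0)=0$. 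This $S_0$ \emph{does} satisfy the conclusion of the proposition, yet your test does not detect it. More generally, a direct coordinate computation for an arbitrary projective deformation $S_0=\bar S_0-2P{\mathbb C}$ of a flat spray (which covers your suggested ``$F_0$ of constant non-zero flag curvature'') yields $d_J(\alpha/\rho)=0$ for every $1$-homogeneous $P$; the cancellation is exact, not generic. So the ``explicit verification'' you flag as the main obstacle is not a technicality to be filled in---it fails, and a different obstruction is needed.

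The paper's argument does not look for an algebraic defect of $\Phi_0$ at all. It fixes the concrete spray $S_0=\widetilde S-2\lambda\widetilde F\,{\mathbb C}$ above and argues by contradiction: if some $S=S_0-2P{\mathbb C}$ were metrizable by $F$ with the same $\Phi_0$, then comparing \eqref{phik} with \eqref{phikl} forces $d_J\kappa=0$; Schur's Lemma (this is where $\dim M\geq 3$ is actually used) then makes $\kappa$ constant. Applying $S$ to the scalar identity $(\widetilde k+\lambda^2)\widetilde F^{\,2}=\kappa F^{2}$ pins down $P=-\lambda\widetilde F$, and a short check with \eqref{hh0} shows that this $P$ violates the Funk equation \eqref{eq_Funk}: $d_{h_0}P=-2\lambda^2\widetilde F\,d_J\widetilde F\neq \lambda^2\widetilde F\,d_J\widetilde F=P\,d_JP$. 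Thus the obstruction lives in the $d_h$-direction, consistent with the SFC test of \cite{BM14}, and your reading of the role of $\dim M\geq 3$ (``enough fibre coordinates'') is not the operative one.
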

\begin{proof}
We consider $\widetilde{S}$ the geodesic spray of a Finsler function
$\widetilde{F}$ of constant flag curvature $\widetilde{k}$. According
to \cite[Theorem 5.1]{BM12}, the spray 
\begin{eqnarray} S_0=\widetilde{S} - 2\lambda
\widetilde{F}{\mathbb C} \label{s0stilde} \end{eqnarray}
is not Finsler metrizable for any real value
of $\lambda$ such that $\widetilde{k}+\lambda^2\neq 0$ and $\lambda
\neq 0$. We fix such
$\lambda$ and the spray $S_0$. Using formula \eqref{phiphi0}, it
follows that the Jacobi endomorphism $\Phi_0$ of the spray $S_0$ is
given by 
\begin{eqnarray}
\Phi_0=\left(\widetilde{k}+\lambda^2\right) \left(\widetilde{F}^2 -
  \widetilde{F} d_J \widetilde{F} \otimes {\mathbb C}\right). \label{phikl}\end{eqnarray}
We will prove by contradiction that the projective class of $S_0$ does
not contain any Finsler metrizable spray, whose Jacobi endomorphism is
given by formula \eqref{phikl}. Accordingly, we assume that there is a
Funk function $P$ for the spray $S_0$ such that the spray
$S=S_0-2P{\mathbb C}$ is metrizable by a Finsler function $F$. Since
$P$ is a Funk function, it follows that $S_0$ and $S$ have the same
Jacobi endomorphism, $\Phi_0=\Phi$. A first consequence is that the
spray $S$ is isotropic and being Finsler metrizable, it follows that
it is of scalar flag curvature $\kappa$. Therefore, the Jacobi
endomorphism $\Phi$ is given by formula \eqref{phik}.

By comparing the two formulae \eqref{phikl} and \eqref{phik} and using
the fact that  $\Phi_0=\Phi$, we obtain that the two Ricci scalars, as
well as the two semi-basic $1$-forms coincide
\begin{eqnarray}
\rho_0= \left(\widetilde{k}+\lambda^2\right) \widetilde{F}^2 = \rho =
\kappa F^2, \quad \alpha_0 = \left(\widetilde{k}+\lambda^2\right)
\widetilde{F} d_J \widetilde{F}= \alpha = \kappa F d_JF. \label{rr0}
\end{eqnarray}
From the above formulae we have that $d_J\rho_0=2\alpha_0$ and
therefore $d_J\rho=2\alpha$. Last formula implies $F^2d_J\kappa +
2\kappa Fd_JF = 2\kappa Fd_JF$, which means $d_J\kappa = 0$. At this
moment we have that $\kappa$ is a function which does not depend on
the fibre coordinates. With this argument, using the assumption that
$\dim{M}\geq 3$ and the Finslerian version of Schur's
Lemma \cite[Lemma 3.10.2]{BCS00} we obtain that the scalar flag
curvature $\kappa$ is a
constant.

 We express now the spray $S$ in terms of the original spray
 $\widetilde{S}$ that we started with,
\begin{eqnarray} S=\widetilde{S}-2\left(\lambda
   \widetilde{F} + P\right){\mathbb C}. \label{sstilde}
\end{eqnarray} Since $S$ is the geodesic
 spray of the Finsler function $F$ and $\widetilde{S}$ is the geodesic
 spray of the Finsler function $\widetilde{F}$ it follows that
 $S(F)=0$ and $\widetilde{S}(\widetilde{F})=0$. From first formula
 \eqref{rr0} we have $(\widetilde{k}+\lambda^2) \widetilde{F}^2 = 
\kappa F^2$. We apply to both sides of this formula the spray $S$
given by \eqref{sstilde} and obtain
 $\lambda\widetilde{F} + P=0$. Therefore, the projective factor is given
 by $P=-\lambda \widetilde{F}$. However, we will show that this
 projective factor $P$ does not
 satisfy the equation \eqref{eq_Funk} and therefore it is not a
 Funk function for the spray $S_0$. The projectively related sprays
 $S_0$ and $\widetilde{S}$ are related by formula
 \eqref{s0stilde}. Using the form of the projective factor $P=-\lambda
 \widetilde{F}$, as well as the formula \eqref{hh0}, we obtain that
 the corresponding horizontal projectors $h_0$ and $\widetilde{h}$ are
 related by 
\begin{eqnarray*}
h_0 = \widetilde{h} + \lambda \widetilde{F} J + \lambda d_J
\widetilde{F} \otimes {\mathbb C}. \end{eqnarray*}
We evaluate now the two sides of the equation \eqref{eq_Funk} for the
projective factor $P=-\lambda
 \widetilde{F}$. For the right hand side we have
\begin{eqnarray*}
d_{h_0}P=-\lambda d_{\widetilde{h}} \widetilde{F} - \lambda^2
\widetilde{F} d_J \widetilde{F} - \lambda^2
\widetilde{F} d_J \widetilde{F} = - 2 \lambda^2
\widetilde{F} d_J \widetilde{F}. \end{eqnarray*}
In the above calculations we used the fact that $\widetilde{S}$ is the
geodesic spray of $\widetilde{F}$ and hence $d_{\widetilde{h}}
\widetilde{F}=0$. For the right hand side of the equation
\eqref{eq_Funk} we have 
\begin{eqnarray*}
Pd_JP=\lambda^2\widetilde{F} d_J\widetilde{F}. \end{eqnarray*}
It follows that the projective factor $P=-\lambda \widetilde{F}$ is not a Funk function for
the spray $S_0$. 

Therefore, we can conclude that for the spray $S_0$, given by formula
\eqref{s0stilde} and that is not Finsler metrizable,  there is no projective deformation 
by a Funk function that will lead to a Finsler metrizable spray. 
\end{proof}

We can provide an alternative proof of Proposition \ref{prop1} using
the constant flag curvature (CFC) test from \cite[Theorem
4.1]{BM13}. More exactly, we can show that the spray $S$ given by
formula \eqref{sstilde} is not metrizable by a Finsler function of
constant flag curvature, and hence not Finsler metrizable, see also
\cite[Theorem 4.2]{BM13}.

\subsection*{Acknowledgments} I express my warm thanks to
Zolt\'an Muzsnay for the discussions we had on the results of this
paper. This work has been supported by the
 Bilateral Cooperation Program Romania-Hungary 672/2013-2014.

\end{document}